\tikzset{>=latex}
\newcommand{\journal}[1]{\def\@journal{\uppercase{#1}}}
\newcommand{\monthyear}[1]{%
  \def\@monthyear{\uppercase{#1}}}
\newcommand{\volnumber}[1]{%
  \def\@volnumber{\uppercase{#1}}}
\def\ps@plain{\ps@empty
  \def\@oddfoot{\@monthyear \hfil \thepage}%
  \def\@evenfoot{\thepage \hfil \@volnumber}}
\def\ps@firstpage{\ps@plain}
\def\ps@headings{\ps@empty
  \def\@evenhead{%
    \setTrue{runhead}%
    \def\thanks{\protect\thanks@warning}%
    \uppercase{\@journal}\hfil}%
  \def\@oddhead{%
    \setTrue{runhead}%
    \def\thanks{\protect\thanks@warning}%
    \hfill\uppercase{Sums of $k$-bonacci Numbers}}%
  \let\@mkboth\markboth
  \def\@evenfoot{%
    \thepage \hfil \@volnumber}%
  \def\@oddfoot{%
    \@monthyear \hfil \thepage}%
  }%
\theoremstyle{plain}
\numberwithin{equation}{section}
\newtheorem{thm}{Theorem}[section]
\newtheorem{theorem}[thm]{Theorem}
\newtheorem{lemma}[thm]{Lemma}
\newtheorem{definition}[thm]{Definition}
\newtheorem{corollary}[thm]{Corollary}
\begin{document}
\monthyear{}
\journal{}
\volnumber{}

\setcounter{page}{1}

\title{Sums of  $k$-bonacci Numbers}
\author{Harold R. Parks}
\address {Department of Mathematics\\
 Oregon State University\\ 
 Corvallis\\ 
 Oregon 97331 USA}
\email{hal.parks@oregonstate.edu}
\author{Dean C. Wills}
\address{Cisco, San Francisco, California 94107 USA}
\email{dean@lifetime.oregonstate.edu}

\begin{abstract}

We give a combinatorial  proof of a formula giving
the partial sums of the $k$-bonacci sequence
as alternating sums of powers of two multiplied by binomial 
coefficients.
As  a corollary we obtain a  formula for the $k$-bonacci numbers.

\end{abstract}

\maketitle

\section{Introduction}

For $k \geq 1$, define $k$-bonacci numbers $f_n^{(k)}$ by the initial values and recursion\footnote{Note that the $k=2$ case gives the usual Fibonacci numbers
with the index shifted by $1$. The combinatorial approach to the
Fibonacci numbers in  \cite{10.4169/j.ctt6wpwjh}  employs this
shift. Extending that combinatorial approach to all  $k\geq 2$, we have
shifted onto the negative indices the beginning $k-1$ zeros that 
usually appear in the $k$-bonacci sequence. Details are given 
in Theorem \ref{combinatorial.shift}. 
Also, note that  $f_{n}^{(1)}=1$ for all $n \ge 0$.}
\begin{equation}\label{fib.recur}
f_n^{(k)} = 
\begin{cases}
       0, &   n < 0, \\
       1, &   n = 0, \\
            \sum_{i=1}^k f_{n-i}^{(k)}, & n\geq 1.
\end{cases}
\end{equation}
\vspace{1em}

The $k$-bonacci numbers were introduced under the name $k$-generalized Fibonacci numbers by  Miles   \cite{10.2307/2308649} in 1960.
According to \cite{Kessler:2004:CPG}, the Miles paper may well be the earliest paper 
on the topic to have appeared in a widely available journal.\footnote{The 
$3$-bonacci numbers  were mentioned by Agronomof  in a note that appeared in 1914 in \cite{Agronomof}.}
It appears that the terminology 
``$k$-bonacci'' was in use as early as 1973 by V. E. Hoggatt, Jr., and Marjorie Bicknell \cite{Bicknell:1973:GFP}. That terminology has continued to be used since then, though not universally.

This paper is about the following formula for the partial sums   of 
of  the sequence of $k$-bonacci numbers:
{\sl For $k\geq 1$ and $n\geq 0$, the 
sum of the first $n+1$ $k$-bonacci numbers
is given by\footnote{We  use  the notation 
$\lfloor \cdot \rfloor$ for the {\em floor function}, so $\lfloor x \rfloor$ equals the largest integer less than or equal to $x$.} }
\begin{equation}\label{main.comb}
\sum_{i=0}^{n} f_{i}^{(k)} = \sum_{j=0}^{\lfloor n/(k+1) \rfloor} (-1)^{j} \binom{n-jk}{j}2^{n-j(k+1)} 
\,.
\end{equation}

We obtained the formula (\ref{main.comb})  in the process of developing growth 
estimates for   sums of $k$-bonacci numbers
that we had hoped to apply to another problem. 
Since this formula seemed to be new,
we put aside the original problem to more fully understand the formula.
We found an algebraic proof for  (\ref{main.comb}), 
but we wondered if the formula could be understood 
at a deeper level---or at least be proved in another way---by  
using a  combinatorial approach.

Thanks to an anonymous referee 
(who used 
\cite{oeis_A172119} to track down the reference), we have learned 
that in fact
the formula  (\ref{main.comb}) should be credited to a 1925 paper by
Otto Dunkel.  
In Dunkel's paper  \cite{dunkel1925solutions}, if one simply compares
the equation for $P_2(n)$ in section 6 to the equation for $P_2(n)$ in section
10, then one sees  that  (\ref{main.comb})  holds.

While Dunkel's paper was concerned with various probability problems in coin tossing,
his method was to proceed via a difference equation and the roots of that
equation, so that 
at its heart Dunkel's  proof of (\ref{main.comb}) is algebraic.

In this paper, we give a  combinatorial proof of (\ref{main.comb}). Also,
because one can obtain the $k$-bonacci numbers from the differences of their  partial sums, 
we  obtain a corollary formula  (\ref{cor.eqn1}) 
for the $k$-bonacci numbers.
Our formula for the $k$-bonacci numbers is similar to 
equation (12) in Corollary 1 of \cite{Kessler:2004:CPG} and 
the formula in Theorem 2.4 of \cite{Howard:2011:SIF}.
It is  not related to the formulas for the $k$-bonacci numbers appearing in \cite{Lee:2001:BFR} and   \cite{10.2307/2308649}. It also does not occur in Dunkel's paper.

\section{Combinatorics}\label{combinatorics}

In \cite{10.4169/j.ctt6wpwjh}, Benjamin and Quinn begin with the observation
that the number of ordered sums of $1$s and $2$s that add to $n$ is the 
$(n+1)$st Fibonacci number. They then introduce a visual representation
that many, if not most, people will find easier to think about than sums
of $1$s and $2$s. They suggest thinking of the $1$s as squares and the $2$s as 
dominoes, so that  the number of ordered sums of $1$s and $2$s that add to $n$
is the number of ways to tile a ruler of length $n$ with squares and 
dominoes (see Figure \ref{fig.square.domino}).

When we generalize to the 
$k$-bonacci numbers
we need to consider the number of ordered sums
of $1$s, $2$s, $\ldots\ $, $k$s that add to $n$.
We will use the metaphor of covering a ruler of length $n$ by tiles of lengths
$1$ through $k$  (see Figure \ref{fig.ruler4}).
This metaphorical ruler will run from left to right.
The numbers obtained in this way are in fact 
$k$-bonacci numbers.

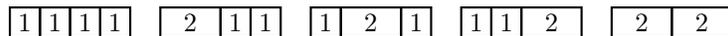
\begin{figure*}[htp]
\begin{center}
\begin{tikzpicture}[scale=0.8]

\draw[thick] (0.0,0) rectangle (0.5,.5) node[pos=.5] {\small 1};
\draw[thick] (0.5,0) rectangle (1.0,.5) node[pos=.5] {\small 1};
\draw[thick] (1.0,0) rectangle (1.5,.5) node[pos=.5] {\small 1};
\draw[thick] (1.5,0) rectangle (2.0,.5) node[pos=.5] {\small 1};

\draw[thick] (2.5,0) rectangle (3.5,.5) node[pos=.5] {\small 2};
\draw[thick] (3.5,0) rectangle (4.0,.5) node[pos=.5] {\small 1};
\draw[thick] (4.0,0) rectangle (4.5,.5) node[pos=.5] {\small 1};

\draw[thick] (5.0,0) rectangle (5.5,.5) node[pos=.5] {\small 1};
\draw[thick] (5.5,0) rectangle (6.5,.5) node[pos=.5] {\small 2};
\draw[thick] (6.5,0) rectangle (7.0,.5) node[pos=.5] {\small 1};

\draw[thick] (7.5,0) rectangle (8.0,.5) node[pos=.5] {\small 1};
\draw[thick] (8.0,0) rectangle (8.5,.5) node[pos=.5] {\small 1};
\draw[thick] (8.5,0) rectangle (9.5,.5) node[pos=.5] {\small 2};

\draw[thick] (10.0,0) rectangle (11.0,.5) node[pos=.5] {\small 2};
\draw[thick] (11.0,0) rectangle (12.0,.5) node[pos=.5] {\small 2};

\end{tikzpicture}
\end{center}
\caption{\label{fig.square.domino}The $f^{(2)}_4=5$ ways of tiling  a ruler of length 4 with squares and dominoes.}
\end{figure*}

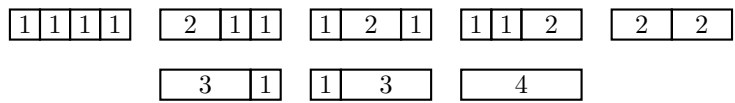
\begin{figure*}[htp]
\begin{center}
\begin{tikzpicture}[scale=0.8]

\draw[thick] (0.0,0) rectangle (0.5,.5) node[pos=.5] {\small 1};
\draw[thick] (0.5,0) rectangle (1.0,.5) node[pos=.5] {\small 1};
\draw[thick] (1.0,0) rectangle (1.5,.5) node[pos=.5] {\small 1};
\draw[thick] (1.5,0) rectangle (2.0,.5) node[pos=.5] {\small 1};

\draw[thick] (2.5,0) rectangle (3.5,.5) node[pos=.5] {\small 2};
\draw[thick] (3.5,0) rectangle (4.0,.5) node[pos=.5] {\small 1};
\draw[thick] (4.0,0) rectangle (4.5,.5) node[pos=.5] {\small 1};

\draw[thick] (5.0,0) rectangle (5.5,.5) node[pos=.5] {\small 1};
\draw[thick] (5.5,0) rectangle (6.5,.5) node[pos=.5] {\small 2};
\draw[thick] (6.5,0) rectangle (7.0,.5) node[pos=.5] {\small 1};

\draw[thick] (7.5,0) rectangle (8.0,.5) node[pos=.5] {\small 1};
\draw[thick] (8.0,0) rectangle (8.5,.5) node[pos=.5] {\small 1};
\draw[thick] (8.5,0) rectangle (9.5,.5) node[pos=.5] {\small 2};

\draw[thick] (10.0,0) rectangle (11.0,.5) node[pos=.5] {\small 2};
\draw[thick] (11.0,0) rectangle (12.0,.5) node[pos=.5] {\small 2};

\draw[thick] (2.5,-1.0) rectangle (4.0,-0.5) node[pos=.5] {\small 3};
\draw[thick] (4.0,-1.0) rectangle (4.5,-0.5) node[pos=.5] {\small 1};

\draw[thick] (5.0,-1.0) rectangle (5.5,-0.5) node[pos=.5] {\small 1};
\draw[thick] (5.5,-1.0) rectangle (7.0,-0.5) node[pos=.5] {\small 3};

\draw[thick] (7.5,-1.0) rectangle (9.5,-0.5) node[pos=.5] {\small 4};

\end{tikzpicture}
\end{center}
\caption{\label{fig.ruler4}The $f^{(4)}_4 = 8$ ways of tiling a ruler of length 4 with tiles of \mbox{lengths 1, 2, 3, 4.}}
\end{figure*}

\begin{theorem} \label{combinatorial.shift}
For $k\geq 1$ and $n\geq 0$, each $f_n^{(k)}$ equals the 
number of ways 
to cover a length $n$ ruler using tiles of lengths $1$ through $k$,
where by definition there are $0$ ways to cover a ruler of negative length.
\end{theorem}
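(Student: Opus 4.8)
The plan is to show directly that the tiling counts obey the same initial condition and recursion as in (\ref{fib.recur}), and then conclude by induction. Write $t_n^{(k)}$ for the number of ways to cover a length-$n$ ruler with tiles of lengths $1$ through $k$, adopting the stated convention that $t_n^{(k)} = 0$ whenever $n < 0$. It then suffices to verify that $t_0^{(k)} = 1$ and that $t_n^{(k)} = \sum_{i=1}^{k} t_{n-i}^{(k)}$ for every $n \ge 1$; since these are precisely the initial value and recursion defining $f_n^{(k)}$ in (\ref{fib.recur}), a routine induction on $n$ gives $t_n^{(k)} = f_n^{(k)}$ for all $n \ge 0$.

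For the base case, a ruler of length $0$ has nothing to cover, so the only covering is the empty one and $t_0^{(k)} = 1$. For the recursion, fix $n \ge 1$ and sort the tilings of the length-$n$ ruler by the length $i$ of the rightmost tile. Since the ruler runs from left to right and every tile has length in $\{1,2,\dots,k\}$, we have $i \in \{1,2,\dots,k\}$, and deleting that last tile sets up a bijection between the tilings of the length-$n$ ruler whose final tile has length $i$ and all tilings of the length-$(n-i)$ ruler. Summing over $i$ yields $t_n^{(k)} = \sum_{i=1}^{k} t_{n-i}^{(k)}$, as needed.

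The only point requiring a little care is the boundary behaviour when $n < k$, where the sum $\sum_{i=1}^{k} t_{n-i}^{(k)}$ involves terms of negative index. Those terms are $0$ by convention, which matches the combinatorics exactly—there is no tiling whose last tile overhangs the ruler—while the term $i = n$ contributes $t_0^{(k)} = 1$, corresponding to the single tile of length $n$. Thus the recursion holds for all $n \ge 1$ with no exceptional cases, and in particular it correctly reproduces the pattern of $k-1$ leading zeros followed by a $1$ that the convention in (\ref{fib.recur}) has shifted onto the negative indices; the case $k=1$ likewise recovers $f_n^{(1)} = 1$, since the all-ones tiling is the unique way to cover a length-$n$ ruler by unit tiles. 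I do not expect any genuine obstacle here: the heart of the argument is the classical ``condition on the last tile'' bijection, and the only thing to watch is this bookkeeping at the boundary.
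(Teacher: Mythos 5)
Your proof is correct and follows essentially the same route as the paper: condition on the length of the rightmost tile, use the convention that rulers of negative length have zero tilings to handle $n<k$, and match the resulting initial value and recursion with (\ref{fib.recur}). The only cosmetic difference is that you phrase it as an induction on an auxiliary sequence $t_n^{(k)}$, while the paper verifies the recursion for the tiling counts directly.
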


\begin{proof}  Fix $k\geq 1$.  

Since there is exactly   one way to 
tile a ruler of length $0$ (and that is to use no tiles), and since 
there are $0$ ways to tile a ruler of negative length,
we see that the number of ways to tile 
a ruler of length $n$ using  tiles of lengths $1$ through $k$ satisfies the initial values of
(\ref{fib.recur}) for $n\leq 0$.

To complete the proof, we will show that for $n\geq 1$ the number of ways to tile
a ruler of length $n$ using  tiles of lengths $1$ through $k$ satisfies 
the recurrence relation of (\ref{fib.recur}). We argue inductively as follows:
For each tiling of the ruler of length $n$, there must be a rightmost tile.
That rightmost tile has a length $\ell\in\{1,2,\dots,k\}$, and in case
$n<k$ we also know that that rightmost tile has length $\ell \leq n$. 
The part of the
ruler to the left of the last tile can be tiled $f_{n-\ell}^{(k)}$ ways and we have
$f_{n-\ell}^{(k)} = 0$  in case $\ell > n$.
Thus it holds that 
\[
f_{n}^{(k)} = \sum_{\ell=1}^k f_{n-\ell}^{(k)}\,.
\]

\end{proof}



\begin{theorem}
For $k\geq 1$ and $n\geq 0$, we have
\begin{equation}\label{finally.named.it}
\sum_{i=0}^{n}	f_{i}^{(k)}= \sum_{i=0}^{\lfloor n/(k+1) \rfloor} (-1)^{i} \binom{n-ik}{i}2^{n-i(k+1)} 
\,.
\end{equation}
\end{theorem}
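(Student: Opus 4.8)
The plan is to interpret the right-hand side of \eqref{finally.named.it} as a signed enumeration of a family of ``decorated tilings'' of a length-$n$ ruler and then evaluate that signed sum a second way by a sign-reversing involution. A \emph{decorated tiling} is a tiling of the length-$n$ ruler using \emph{big tiles} of length $k+1$ (uncolored) and \emph{unit tiles} of length $1$, each unit tile colored either white or black; its sign is $(-1)^{b}$ where $b$ is the number of big tiles it uses. If the number of big tiles is fixed to be $j$, there are $n-j(k+1)$ unit tiles, the $j$ big tiles sit among them in $\binom{n-jk}{j}$ ways, and the unit tiles are colored in $2^{n-j(k+1)}$ ways; thus the total of the signs over all decorated tilings is exactly $\sum_{j=0}^{\lfloor n/(k+1)\rfloor}(-1)^{j}\binom{n-jk}{j}2^{n-j(k+1)}$, the right-hand side of \eqref{finally.named.it}.

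For the left-hand side, I would first recall from Theorem~\ref{combinatorial.shift} that $\sum_{i=0}^{n}f_i^{(k)}$ counts the tilings of rulers of lengths $0,1,\dots,n$ by tiles of lengths $1$ through $k$, and that such a tiling is the same datum as a tiling of a prefix of the length-$n$ ruler followed by an untiled suffix. Encode this by coloring black the last cell of each tile of the prefix and coloring white every other cell (interior cells of tiles, and every cell of the suffix). This is a bijection onto the set of strings in $\{W,B\}^{n}$ that contain no occurrence of $W^{k}B$: a tile of length $k+1$ is exactly what would create a $W^kB$, the position of the last black records where the prefix ends, and the all-white string is the empty tiling. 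Hence the left-hand side of \eqref{finally.named.it} equals the number of $W^kB$-avoiding strings of length $n$; and these are precisely the decorated tilings with no big tile whose unit-tile coloring avoids $W^kB$, each of sign $+1$.

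It remains to build the involution. Call a cell $p$ (with $p+k\le n$) a \emph{pivot} if either a big tile occupies cells $p,p+1,\dots,p+k$, or cells $p,\dots,p+k-1$ carry white unit tiles and cell $p+k$ carries a black unit tile. A decorated tiling with no pivot is exactly one with no big tile and no $W^kB$, i.e.\ a fixed point. Otherwise let $p$ be the smallest pivot and swap there: a big tile on $p,\dots,p+k$ is replaced by $k$ white unit tiles followed by one black unit tile, and conversely $k$ whites followed by a black on $p,\dots,p+k$ is replaced by a single big tile. This changes $b$ by $\pm1$, reversing the sign. Granting that this is an involution, the signed sum over all decorated tilings equals the number of fixed points, which is $\sum_{i=0}^{n}f_i^{(k)}$, and comparing with the first paragraph gives \eqref{finally.named.it}.

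The main obstacle is checking that the swap leaves $p$ as the smallest pivot of the new tiling, so that applying the rule again undoes it. The crucial observation is that the position of a $W^kB$ block is pinned by its unique black cell, so unfolding a big tile at $p$ into $W^kB$ cannot produce a $W^kB$ block ending before cell $p+k$, and any big tile or black unit tile newly introduced lies inside cells $p,\dots,p+k$, hence too far right to create a pivot smaller than $p$; the reverse swap is symmetric. A second, easier point to record carefully is the claim in the second paragraph that a $W^kB$-avoiding string with its last black in cell $i$ decodes to a legal tiling of the length-$i$ prefix---this amounts to noting that avoiding $W^kB$ forces every run of whites preceding a black to have length at most $k-1$, so every tile of the decoded prefix has length at most $k$. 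Both checks are short case analyses.
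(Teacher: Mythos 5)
Your argument is correct, but it runs along a genuinely different track from the paper's. The paper interprets the left-hand side as tilings of total length at most $n$ with tiles of length at most $k$, embeds these in the $2^n$ unrestricted tilings via hash marks, and then applies the Principle of Inclusion--Exclusion to the sets $U_j$ of tilings containing an over-long tile (any length greater than $k$) ending at position $j$; each $i$-fold intersection sum is counted by a separate bijection in which $i$ dashed marks are chosen on a shortened ruler of length $n-ik$ and the tiles ending at them are stretched by $k$, giving $\binom{n-ik}{i}2^{n-i(k+1)}$. You instead build a single signed set --- tilings of the length-$n$ ruler by uncolored big tiles of length exactly $k+1$ and two-colored unit tiles, signed by parity of the number of big tiles --- whose signed enumeration is visibly the right-hand side, identify the left-hand side with $W^kB$-avoiding binary strings (essentially the same hash-mark encoding the paper uses, with black cells playing the role of hash marks), and cancel everything else with an explicit sign-reversing involution that toggles, at the leftmost ``pivot,'' between a big tile and a $W^kB$ block in place. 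What your route buys is that it avoids invoking inclusion--exclusion and avoids the paper's stretch-and-shift construction (your toggle is length-preserving and purely local), at the cost of the minimal-pivot verification, which you correctly flag and which does go through: the toggled cells lie in positions $p,\dots,p+k$, so no pivot can be created or destroyed strictly to the left of $p$, making the map an involution whose fixed points are exactly the $W^kB$-avoiding strings. The paper's route, conversely, keeps each term of the alternating sum attached to a concrete family of over-long-tile configurations and leans on PIE as a black box. Both proofs explain the factor $\binom{n-jk}{j}2^{n-j(k+1)}$ by the same choice count (positions of $j$ special objects among $n-jk$ slots, plus $n-j(k+1)$ free binary choices), so the difference is in the cancellation mechanism rather than in the enumeration.
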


\begin{proof}  Observe that 
$ \sum_{i=0}^{n}	f_{i}^{(k)}$ is the number of ways to place tiles of lengths 
\mbox{$1$ through  $k$} end-to-end with total length not exceeding $n$.
To prove the theorem, we will count the number of ways that this can be done.

%
Let $U$ be the set of tilings of length not exceeding $n$
where there is no restriction on the sizes of the tiles that are used. 
The cardinality of $U$ is easily obtained as follows:
On a ruler of length $n$, place a hash mark at the $0$ position
and at each of the integer positions $1$ to $n$ either do or do not
place a hash mark.  Then between any two  hash marks place a tile that fills the space.
The last tile  ends at the last hash mark.  Because there are $2^n$ ways to place or not place 
the hash marks,  we have
\[
\#(\kern0.05em U\kern0.05em ) = 2^n
\,.
\]
In case $k\geq n$, no tile has length exceeding $k$, so
this value, $2^n$, equals the left-hand side of (\ref{finally.named.it}).
Also, when $k\geq n$,
the summation on the right-hand side of  (\ref{finally.named.it})  reduces to the
single term when $i=0$, that is, to $2^n$. Thus we see that (\ref{finally.named.it})  holds.
From here on we assume that $n>k$.

\medskip
To obtain the value on the  
left-hand side of (\ref{finally.named.it}) we must subtract from $2^n$ the
number of tilings  in $U$  that include at least
one tile of length greater than  $k$. To this end,
define $U_j$ to be the set of tilings in $U$ 
for which a  tile  that has  length greater than  $k$
has its right end at  integer position $j$. Note that this tile does 
not need to be the rightmost tile with length greater than $k$.
We need to evaluate
\[
\#\left( \bigcup_{1 \leq j\leq n} U_j \right)
\,.
\]
The Principle of Inclusion-Exclusion
(see for example  \cite{nelson2020brief} or \cite{stanley2012enumerative}) 
tells us that
\begin{equation}\label{value.to.subtract2}
\# \left( \bigcup_{1\leq j\leq n} U_j \right)
=
\sum_{1\leq j\leq n} \#  \Big(U_j\Big)
-
\sum_{1\leq j_1<j_2\leq n} \# \Big( U_{j_1}\cap U_{j_2} \Big) 
+
\cdots
\,.
\end{equation}

To evaluate the right-hand side of  (\ref{value.to.subtract2}),
we will need to  compute
\begin{equation}\label{sum.to.find}
\sum_{1\leq j_1<j_2<\cdots<j_i\leq n}
\#\Big( U_{j_1}\cap U_{j_2}\cap \cdots \cap U_{j_i} \Big)
\,.
\end{equation}
Note that the set  $U_{j_1}\cap U_{j_2}\cap \cdots \cap U_{j_i} $
will be empty unless 
\begin{equation}\label{nonempty.conditions}
k < j_1 \hbox{\rm\ \ and\ \ }
j_\ell+k  < j_{\ell+1}
\hbox{\rm \ \ for\ \ }\ell = 1,2,\dots,i-1
\,,
\end{equation}
because no two  tiles are allowed to overlap. Note that (\ref{nonempty.conditions})
tells us that $i(k+1)\leq n$, so only  when  $i\leq \lfloor n/(k+1) \rfloor$ will 
any of the summands in 
(\ref{sum.to.find}) be nonzero.

\medskip
Working with a ruler of length $n-ik$, we will show how to
evaluate (\ref{sum.to.find}). (The construction we are about to 
describe is illustrated in Figure \ref{fig.red.marks}.)
Choose $i$ numbers from $\{1,2,\dots,n-ik\}$.
Of course,
this can be done in $\binom{n-ik}{ i}$ ways. Label the
chosen numbers  $r_1,r_2,\dots,r_i$ so that   
\begin{equation}\label{conditions3}
0<  r_1 \hbox{\rm\ \ and\ \ }
r_\ell < r_{\ell+1}
\hbox{\rm \ \ for\ \ }\ell = 1,2,\dots,i-1
\,.
\end{equation}
For $\ell = 1, 2,\dots, i$, put a dashed hash mark at 
integer position $r_\ell$.

Next, place a normal hash mark at 0. 
There remain $n-i(k+1)$ unmarked positive integer positions between $1$ and $n-ik$.
At each of these remaining unmarked positions either do or do not place a normal hash mark.  This can be done in $2^{n-i(k+1)}$  ways.
Between any two  hash marks (dashed and dashed, normal and normal, or dashed and normal) place a tile that fills the space. 
The last tile  ends at the last hash mark, and the tiling has total length not exceeding $n-ik$.

\tikzset{
  shaded/.style = {gray, thin},
  redline/.style = {black, dashed, very thick},
  blackline/.style = {black, very thick},
  removed_redline/.style={black,  thin, dashed},
}

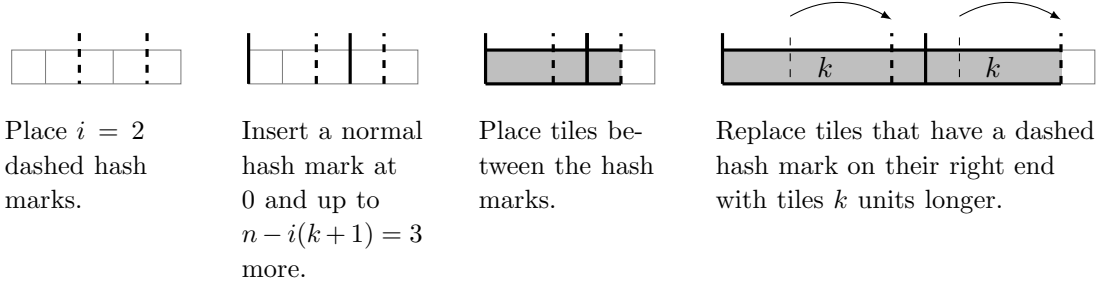
\begin{figure*}[htp]
\begin{center}
\begin{tikzpicture}[scale=0.9]

\draw[shaded] (-4.0,-0.5) rectangle (-1.5,-1.0) ;
\draw[shaded] (-3.5,-0.5) -- (-3.5,-1.0) ;
\draw[redline] (-3.0,-0.25)--(-3.0,-1.0) ;
\draw[shaded] (-2.5,-0.5) -- (-2.5,-1.0) ;
\draw[redline] (-2.0,-0.25)--(-2.0,-1.0) ;
\node[text width=2.4cm, anchor=north west] at ( -4.25 , -1.4 ) {\small Place $i=2$ dashed hash marks.};

\draw[shaded] (-0.5,-1.0) rectangle (2.0,-0.5) ;
\draw[shaded] (0.0,-1.0) -- (0.0,-0.5) ;
\draw[redline] (0.5,-1.0)--(0.5,-0.25) ;
\draw[blackline] (1.0,-1.0)--(1.0,-0.25) ;
\draw[redline] (1.5,-1.0)--(1.5,-0.25) ;
\draw[blackline] (-0.5,-1.0)--(-0.5,-0.25) ;
\draw[blackline] (4.5,-1.0)--(4.5,-0.25) ;
\node[text width=2.4cm, anchor=north west] at ( -0.75 , -1.4 ) {\small Insert a normal hash mark at 0 and up to $n-i(k+1) = 3$ more.};

\draw[shaded] (3.0,-1.0) rectangle (5.5,-0.5) ;
\filldraw[gray!50] (3.0, -1.0) rectangle (4.0, -0.5) ;
\filldraw[gray!50] (4.0, -1.0) rectangle (4.5, -0.5) ;
\filldraw[gray!50] (4.5, -1.0) rectangle (5.0, -0.5) ;
\draw[redline] (4.0,-1.0)--(4.0,-0.25) ;
\draw[blackline] (4.5,-1.0)--(4.5,-0.25) ;
\draw[redline] (5.0,-1.0)--(5.0,-0.25) ;
\draw[blackline] (3.0,-1.0)--(5.0,-1.0) ;
\draw[blackline] (3.0,-0.5)--(5.0,-0.5) ;
\draw[blackline] (3.0,-1.0)--(3.0,-0.25) ;
\draw[blackline] (4.5,-1.0)--(4.5,-0.25) ;

\node[text width=2.4cm, anchor=north west] at ( 2.75 , -1.4 ) {\small Place tiles between the hash marks.};

\draw[shaded] (11.5,-1.0) rectangle (12.0,-0.5) ;
\filldraw[gray!50] (6.5,-1.0) rectangle node[black, right] {$k$} (9.0,-0.5) ;
\filldraw[gray!50] (9.0,-1.0) rectangle (9.5,-0.5) ;
\filldraw[gray!50] (9.5,-1.0) rectangle node[black] {$k$} (11.5,-0.5) ;
\draw[removed_redline]  (7.5,-0.90)--(7.5,-0.20) ;
\draw[redline] (9.0,-1.0) -- (9.0,-0.25) ;
\draw[blackline] (9.5,-1.0) -- (9.5,-0.25) ;
\draw[blackline] (6.5,-1.0) -- (6.5,-0.25) ;
\draw[removed_redline]  (10.0,-0.90)--(10.0,-0.20) ;
\draw[redline]  (11.5,-1.0) -- (11.5,-0.25) ;
\draw[blackline] (6.5,-1.0) -- (11.5,-1.0) ;
\draw[blackline] (6.5,-0.5) -- (11.5,-0.5) ;

\draw [->,black] (7.5,-0.0) to [out=30,in=150] (9.0,-0.05);
\draw [->,black] (10.0,-0.0) to [out=30,in=150] (11.5,-0.05);

\node[text width=5.2cm, anchor=north west] at ( 6.25 , -1.4 ) {\small Replace tiles that have a dashed hash mark on their right end with tiles $k$ units longer.};

\end{tikzpicture}
\end{center}
\caption{\label{fig.red.marks}Example of the construction with $n=11,\ k=3,\ i=2$. One of three
possible normal hash marks is inserted.}
\end{figure*}

Finally,  each  tile that has its right end at a dashed hash mark is replaced with a tile that is $k$ units longer while the tiles to its right are moved along $k$ units to accommodate the longer tile. 
Thus we have created a tiling the length of which does not exceed $n$ and that for each $\ell = 1,2, \dots, i$ has a tile of  length greater than or equal to $k+1$ with its right end at  integer position $j_\ell := r_\ell + \ell k$. 
The $j_\ell$ satisfy (\ref{nonempty.conditions}) if and only if the $r_\ell$ satisfy condition (\ref{conditions3}).
Thus  we see that  
\[
\sum_{1\leq j_1<j_2<\cdots<j_i\leq n}
\#\Big( U_{j_1}\cap U_{j_2}\cap \cdots \cap U_{j_i} \Big)
=
\binom{n-i k }{ i}\, 2^{n-i(k+1)}\,.
\]
\end{proof}

\section{Corollaries}

\begin{corollary}
For $k\geq 1$, $n\geq 0$,
and  $m$ with ${\lfloor n/k \rfloor}  \geq m \geq {\lfloor n/(k+1) \rfloor}  $, it 
holds that
\begin{equation}
\label{eqn.comb2}
\sum_{i=0}^{n} f_i^{(k)}  
=\sum_{j=0}^{m} (-1)^{j} \binom{n-jk}{j}2^{n-j(k+1)} 
\,.
\end{equation}
\end{corollary}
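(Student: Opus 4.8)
The plan is to derive this corollary directly from the theorem just proved, by observing that raising the upper limit of summation from $\lfloor n/(k+1)\rfloor$ to any $m$ with $m\le \lfloor n/k\rfloor$ only introduces terms that are identically zero. Equation~\eqref{finally.named.it} already gives
\[
\sum_{i=0}^{n} f_i^{(k)} \;=\; \sum_{j=0}^{\lfloor n/(k+1)\rfloor} (-1)^{j} \binom{n-jk}{j} 2^{n-j(k+1)}\,,
\]
so it suffices to show that $\binom{n-jk}{j}=0$ for every integer $j$ with $\lfloor n/(k+1)\rfloor < j \le m$.

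To see this, fix such a $j$. From $j > n/(k+1)$ we get $j(k+1) > n$, hence $n-jk < j$. From $j \le m \le \lfloor n/k\rfloor$ we get $jk \le n$, hence $n-jk \ge 0$. Thus $n-jk$ is a nonnegative integer strictly smaller than $j$, so $\binom{n-jk}{j}$ — the number of $j$-element subsets of an $(n-jk)$-element set — equals $0$. (When $m=\lfloor n/(k+1)\rfloor$ there is nothing to add and the claim is exactly Equation~\eqref{finally.named.it}; in general the interval $[\lfloor n/(k+1)\rfloor,\lfloor n/k\rfloor]$ may be a single point.) Consequently
\[
\sum_{j=0}^{m} (-1)^{j} \binom{n-jk}{j} 2^{n-j(k+1)}
= \sum_{j=0}^{\lfloor n/(k+1)\rfloor} (-1)^{j} \binom{n-jk}{j} 2^{n-j(k+1)}
= \sum_{i=0}^{n} f_i^{(k)}\,,
\]
which is \eqref{eqn.comb2}.

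The only point that needs a moment's care is the interpretation of $\binom{n-jk}{j}$: the hypothesis $m\le\lfloor n/k\rfloor$ is precisely what ensures the top entry $n-jk$ never becomes negative, so that we may invoke the elementary fact $\binom{a}{b}=0$ for integers $0\le a<b$ rather than a generalized binomial coefficient (which would be nonzero for negative $a$). I do not expect any substantial obstacle; the corollary is essentially a bookkeeping statement about which terms of the sum actually contribute.
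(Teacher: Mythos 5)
Your proof is correct and follows essentially the same route as the paper's: both show that every extra term with $\lfloor n/(k+1)\rfloor < j \le \lfloor n/k\rfloor$ satisfies $0 \le n-jk < j$, so $\binom{n-jk}{j}=0$ and the added summands vanish. No gaps.
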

\begin{proof}
The summands that are on the right-hand side of
(\ref{eqn.comb2}), but not on the right-hand side of 
(\ref{main.comb}), are those for which
$$
{\lfloor n/(k+1) \rfloor} <  j \leq {\lfloor n/k \rfloor} 
$$
holds. 
Note that the left-hand inequality is 
equivalent to $n-jk < j$ and the right-hand inequality 
is equivalent to $n-jk \geq 0$, so that for such a $j$ the binomial coefficient 
$\binom{n-jk}{j} $ equals $0$. 
\end{proof}

As a second  corollary we obtain a formula for the 
$k$-bonacci numbers
in terms of binomial coefficients and powers of $2$.
\begin{corollary}\label{main.cor}
For  $k\geq 1$ and $n \geq 0$, we have\footnote{To avoid  $0/0$  when
$n=j=0$, we use the 
Kronecker $\delta$ defined by $\delta_{i, j} = 1$
if $i=j$ and $\delta_{i, j} = 0$ if $i\neq j$.}
\begin{equation}\label{cor.eqn1}
f_{n}^{(k)}= 
\sum_{j=0}^{\lfloor n/(k+1)\rfloor} (-1)^j\,
\frac{(n-jk)+j+\delta_{n,0}}{2(n-jk)+\delta_{n,0}}\,  \binom{n-jk}{j} \,  2^{n-j(k+1)}
\,.
\end{equation}
\end{corollary}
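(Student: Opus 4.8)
The plan is to obtain (\ref{cor.eqn1}) directly from the partial-sum formula (\ref{finally.named.it}) by taking differences, since $f_n^{(k)}=\bigl(\sum_{i=0}^{n}f_i^{(k)}\bigr)-\bigl(\sum_{i=0}^{n-1}f_i^{(k)}\bigr)$ for $n\geq 1$, and $f_0^{(k)}=1$ is handled separately (this is exactly where the Kronecker $\delta$ enters). So for $n\geq 1$ I would write
\[
f_n^{(k)} \;=\; \sum_{j=0}^{\lfloor n/(k+1)\rfloor} (-1)^{j}\binom{n-jk}{j}2^{n-j(k+1)}
\;-\; \sum_{j=0}^{\lfloor (n-1)/(k+1)\rfloor} (-1)^{j}\binom{n-1-jk}{j}2^{n-1-j(k+1)}
\]
and aim to show the right-hand side equals the claimed sum. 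The first technical point is that the two sums can be taken over the same index range $0\le j\le \lfloor n/(k+1)\rfloor$: when $\lfloor (n-1)/(k+1)\rfloor < \lfloor n/(k+1)\rfloor$, the extra top term in the first sum has $j=\lfloor n/(k+1)\rfloor$ with $n-jk\le j$ but $n-jk$ possibly equal to... actually here one must be slightly careful, but the Corollary immediately preceding (the range extension (\ref{eqn.comb2})) shows precisely that enlarging the summation range only adds terms whose binomial coefficient vanishes, so I may freely run both sums to $j=\lfloor n/(k+1)\rfloor$ at the cost of terms that are zero. With a common range, I would combine term-by-term.

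The heart of the computation is the per-$j$ identity: pull out the common factor $(-1)^{j}2^{n-1-j(k+1)}$ and show
\[
2\binom{n-jk}{j} - \binom{(n-1)-jk}{j}
\;=\; \frac{(n-jk)+j}{\,n-jk\,}\,\binom{n-jk}{j}.
\]
Writing $a:=n-jk$, this is just $2\binom{a}{j}-\binom{a-1}{j}=\frac{a+j}{a}\binom{a}{j}$, which follows from $\binom{a-1}{j}=\frac{a-j}{a}\binom{a}{j}$ (the standard absorption identity), since $2-\frac{a-j}{a}=\frac{a+j}{a}$. This is elementary and I would state it as a one-line lemma or just inline it. Multiplying back by $(-1)^{j}2^{n-1-j(k+1)}=(-1)^{j}2^{n-j(k+1)}/2$ gives the summand
\[
(-1)^{j}\,\frac{(n-jk)+j}{2(n-jk)}\,\binom{n-jk}{j}\,2^{n-j(k+1)},
\]
which is exactly (\ref{cor.eqn1}) with $\delta_{n,0}=0$, valid for $n\ge 1$.

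The main obstacle — really the only place requiring care — is the boundary behaviour of the index ranges and of the absorption identity when $a=n-jk=0$, i.e. when $j$ achieves the top of its range and $n$ is an exact multiple of... no: if $n-jk=0$ with $j\ge 1$ then $\binom{0}{j}=0$ and the term contributes nothing on either side, but the fraction $\frac{a+j}{2a}$ is formally $\frac{j}{0}$; the convention ``$\binom{0}{j}=0$ kills the term'' must be invoked to discard it, and one should note the division is only a formal bookkeeping artifact there. The genuinely singular case is $n=j=0$: then $a=0$ and the binomial $\binom{0}{0}=1$ does \emph{not} vanish, so the $\delta_{n,0}$ correction is needed to make the single term equal $\frac{0+0+1}{0+1}\cdot 1\cdot 2^{0}=1=f_0^{(k)}$. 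I would therefore close by checking the $n=0$ case by hand (the formula reduces to its $j=0$ term, which with $\delta_{0,0}=1$ equals $1$), and remarking that for $n\ge 1$ the $\delta$ is $0$ so it changes nothing in the difference computation above. That completes the proof.
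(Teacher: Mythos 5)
Your proposal is correct and follows essentially the same route as the paper: write $f_n^{(k)}$ as the difference of consecutive partial sums, use the range-extension corollary (\ref{eqn.comb2}) to put both sums over the common upper limit $\lfloor n/(k+1)\rfloor$, apply the absorption identity $2\binom{a}{j}-\binom{a-1}{j}=\frac{a+j}{a}\binom{a}{j}$ with $a=n-jk$ termwise, and check $n=0$ separately via the Kronecker $\delta$. The only difference is cosmetic: the paper explicitly verifies $\lfloor (n-1)/k\rfloor \geq \lfloor n/(k+1)\rfloor$ before invoking (\ref{eqn.comb2}) for the shifted sum, a small check you leave implicit (and your worry about $n-jk=0$ with $j\geq 1$ is moot, since that cannot occur for $j\leq\lfloor n/(k+1)\rfloor$ when $n\geq 1$).
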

\begin{proof} 
For $n= 0$, the summation on the right-hand side of  (\ref{cor.eqn1})
consist of only the $j=0$ term, so  the result is confirmed by (\ref{fib.recur}).

For  $k\geq 1$ and $n \geq 1$,
using (\ref{main.comb}), we have:
\[
f_{n}^{(k)}=
\sum_{i=0}^{n} f_i^{(k)} 
- \sum_{i=1}^{n-1} f_i^{(k)}
\]
and we will use    (\ref{eqn.comb2}) 
to replace the partial sums of  $k$-bonacci numbers in this equation.
We will  need to examine the upper limits in the summations that
occur  in  (\ref{eqn.comb2}).

Set $q = \lfloor n/(k+1)\rfloor $.  Then $n = q(k+1) + r$, where
$r$ is an integer with $0\leq r < k+1$.
We  see that 
$$
\frac{n-1}{k} = q + \frac{q+r-1}{k}
\,.
$$
Since $n\geq 1$, one or both of $q$ and $r$ must be positive.
We conclude that $\lfloor (n-1)/k \rfloor \geq q$, so
when    (\ref{eqn.comb2})  is applied with $n$ replaced by $n-1$
we may use $q$ as  the upper limit of summation.

We have
\begin{align}\nonumber
	f_{n}^{(k)} 
\nonumber
		&=\sum_{j=0}^{q} (-1)^{j} \binom{n-jk}{j}2^{n-j(k+1)} 
		- \sum_{j=0}^{q} (-1)^{j} \binom{(n-1)-jk}{j}2^{(n-1)-j(k+1)} \\
\nonumber 
		&=\sum_{j=0}^{q} (-1)^{j}  \left[ 2 \binom{n-jk}{j}- \binom{n-jk-1}{j} \right] \frac12\, 2^{n-j(k+1)} \\
\nonumber
		&=\sum_{j=0}^{q} (-1)^{j} 
		\frac12 \left[ \frac{2(n-jk)}{n-jk}  - \frac{n-jk-j}{n-jk} \right] \binom{n-jk}{j} 2^{n-j(k+1)} \\
\nonumber
		&=\sum_{j=0}^{q} (-1)^{j} 
	 \frac{(n-jk) + j}{2(n-jk)}  \binom{n-jk}{j} 2^{n-j(k+1)} 
\end{align}

\end{proof}




\bibliography{thebib,fibquart}
\bibliographystyle{plain}

\medskip

\noindent MSC2020: 05A99, 11B39

\end{document}